\newcommand{\F}{\mathbb F}
\newcommand{\R}{\mathbb R}
\newcommand{\C}{\mathbb C}
\newcommand{\Q}{\mathbb Q}
\newcommand{\Zd}{\mathbb{Z}/2\mathbb{Z}}
\newcommand{\zd}{\mathbb{Z}/2\mathbb{Z}}
\DeclareSymbolFontAlphabet{\mathbbm}{bbold}
\DeclareSymbolFontAlphabet{\mathbb}{AMSb}
\newtheorem{theorem}{Theorem}[section]
\newtheorem{definition}[theorem]{Definition}
\newtheorem{example}[theorem]{Example}
\newtheorem{lemma}[theorem]{Lemma}
\newtheorem{proposition}[theorem]{Proposition}
\newtheorem{remark}[theorem]{Remark}
\numberwithin{equation}{section}
\newcommand{\bk}{\Bbbk}
\newcommandtwoopt\eck[3][G][\Bbbk]{H^*_{#1}(#3; #2)}
\newcommandtwoopt\csk[2][G][\Bbbk]{H^*(B{#1}; #2)}
\newcommand\inv[1]{#1^{-1}}
\DeclareMathOperator{\ext}{Ext} 
\DeclareMathOperator{\rank}{rank} 
\DeclareMathOperator{\depth}{depth}
\title[Equivariant cohomology for elementary $2$-abelian groups]{
The quotient criterion for syzygies in equivariant cohomology for elementary abelian $2$-group actions
}
\date{ \today }
\author[Chaves]{Sergio Chaves}
\address[Sergio Chaves]{Department of Mathematics\\Western University \\London, ON. Canada}
\email{schavesr@uwo.ca}
\begin{document}

\begin{abstract}
Let $G$ be a elementary abelian $2$-group and $X$ be a manifold with a locally standard action of $G$. We provide a criterion to determine the syzygy order of the $G$-equivariant cohomology of $X$ with coefficients over a field of characteristic two using a complex associated to the cohomology of the face filtration of the manifold with corners $X/G$. This result is the real version of the quotient criterion for locally standard torus actions developed in \cite{franzQuot}.
\end{abstract}

\maketitle

\section{Introduction}\label{se:intro}

Let $G$ be a compact Lie group and $X$ be a $G$-space. The $G$-equivariant cohomology of $X$ with coefficients over a field $\bk$ is defined as the singular cohomology of the Borel construction \cite{borel1961seminar} $H^*_G(X; \bk)=H^*(X_G; \bk)$. It inherits a canonical module structure over  the cohomology of the classifying space $H^*(BG;\bk)$ of $G$.

If the restriction map $H^*_G(X;\bk) \rightarrow H^*(X;\bk)$ is surjective, the $G$-equivariant cohomology is a free module over $H^*(BG;\bk)$ as a consequence of the Leray-Hirsch theorem. In this case, we say that $X$ is $G$-equivariantly formal over $\bk$. The converse also holds when extra assumptions over $G$ are considered. For example,  if $G$ is connected, it is a direct consequence of the Eilenberg-Moore spectral sequence associated to the fibration $X \rightarrow X_G \rightarrow BG$, and a discussion for nilpotent actions of any $G$ can be found in \cite[\S 4.1]{ptori}.  Examples of equivariantly formal spaces over $\Q$ include smooth compact toric varieties and quasitoric manifolds \cite{davis1991convex}, symplectic manifolds with Hamiltonian torus actions \cite{atiyah1982convexity} and  $G$-spaces with vanishing odd rational cohomology when $G$ is connected. Moreover, when cohomology with rational coefficients is considered, the ring $H^*(BG;\Q)$ becomes a polynomial ring in $n$ generators sitting in even degrees. In this case, a $G$-space $X$ is equivariantly formal if and only if its equivariant cohomology fits in a long exact sequence
\begin{equation}
0 \rightarrow H^*_G(X;\Q) \rightarrow F_1 \rightarrow \cdots F_n
\end{equation}
of free $H^*(BG;\Q)$-modules $F_j$ for $1 \leq j \leq n$ by the Hilbert Syzygy Theorem. This equivalence motivates the study of syzygies in equivariant cohomology started in \cite{AFP} for torus actions.

Recall that a finitely generated module $M$ over a commutative ring $R$ is a $j$-th syzygy if there is an exact sequence
\begin{equation}\label{eq:syzygy}
0 \rightarrow M \rightarrow F_1 \rightarrow \cdots \rightarrow F_j
\end{equation}
of free modules $F_k$ for $1 \leq k \leq j$. In \cite[Thm.5.7]{AFP}, the authors showed that the syzygy order of the equivariant cohomology of a space with a torus action is equivalent to the partial exactness of the Atiyah-Bredon  with rational coefficients.  This sequence  was firstly discussed in \cite{atiyah2006elliptic} \cite{bredon1974free} and it is defined in the following way: Let $T = (S^1)^n$ be a torus and $X$ be a $T$-space. The filtration of $X$ by its orbit dimensions $X_0 = X^T \subseteq X_1 \subseteq \cdots \subseteq X_n = X $ induces a complex 
\begin{equation}\label{abseq}
 0 \rightarrow H^*_T(X) \rightarrow H^*_T(X^T) \rightarrow H^{*+1}(X_1,X_0) \rightarrow \cdots \rightarrow H^{*+n}(X_n, X_{n-1}) 
\end{equation}
that is referred nowadays as the Atiyah-Bredon sequence of the $T$-space $X$.

The characterization of syzygies via the partial exactness of the sequence \ref{abseq} can be extended to any compact connected Lie group $G$ over the rationals by restriction of the action to a maximal torus  $T\subseteq G$ \cite{franz2016syzygies} and to any elementary abelian $p$-group  $G_p$ over a field of characteristic $p$ by transfer and restriction of the action under the inclusion $G_p \subseteq T$ \cite{ptori}. 

Another remarkable characterization of syzygies in equivariant cohomology for torus actions is the quotient criterion for locally standard actions developed in \cite{franzQuot}. Recall that these actions are modeled by the standard representation of $T=(S^1)^n$ on $\C^n$. Such characterization is given as follows: for  a locally standard smooth action on a $T$-manifold $X$, the quotient space $M = X/T$ is a nice manifolds with corners and the syzygy order of the $T$-equivariant cohomology of $X$ it is determined by the topology of filtration of $M$ by its faces. In particular,  this result recovers the equivariant formality over $\Q$ of compact smooth toric varieties, torus manifolds and quasi-toric manifolds.

In this paper, we discuss locally standard torus actions modeled by the standard representation of $G = (\zd)^n$ on $\R^n$ and cohomology with coefficients over a field of characteristic 2, that we refer as the ``real version'' of the torus case.  We use the characterization of syzygies via the Atiyah-Bredon sequence for elementary abelian $2$-groups and the description of the Ext modules of the equivariant homology as the cohomology of the Atiyah-Bredon complex as discussed in \cite{ptori} analogously to the torus actions case \cite{franzQuot}. The main result of this document is the following.

Let $G = (\zd)^n$ be a elementary abelian $2$-group and let $X$ be a compact manifold of dimension $m \geq n$ with a locally standard action of $G$. Then $M = X/G$ becomes a $m$-manifold with $n$-corners (Definition \ref{def:mc}) and for any face $P \subseteq M$ we consider the complex
 \[ B^i(P)  = \bigoplus_{\substack{Q \subseteq P \\ \rank Q = i }} H^{*}(Q, \partial Q) \]
with differential induced by the connecting homomorphism of the cohomological long exact sequence associated to the triple $(Q,\partial Q, \partial Q \setminus (P \setminus \partial P) )$. The syzygy order of the $G$-equivariant cohomology of $X$ is determined by the vanishing of the cohomology of the complex $B^i(P)$ for any $P$ in certain range as we state in the following theorem.

\begin{theorem}
	 Let $\bk$ be a field of characteristic two and $1 \leq j \leq n$. $H^*_G(X; \bk)$ is a $j$-th syzygy over $H^*(BG;\bk)$ if and only if for any face $P$ of the manifold with corners $M = X/G$ we have that $H^i(B^*(P))= 0$ for any $i > \max(\rank P - j, 0)$.
\end{theorem}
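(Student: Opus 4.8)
The plan is to carry over the proof of \cite{franzQuot} from the torus case over the rationals to the real case over a field of characteristic two, using two results of \cite{ptori}: the characterization of the syzygy order of equivariant cohomology for an elementary abelian $2$-group action by partial exactness of the Atiyah-Bredon complex, and the fact that the cohomology of that complex computes the Ext modules of equivariant homology. Write $R=H^*(BG;\bk)=\bk[s_1,\dots,s_n]$, now with the $s_i$ in degree one, let $\pi\colon X\to M$ be the quotient map, and filter $X$ by $X_i=\pi^{-1}(\{\,P:\rank P\ge n-i\,\})$, i.e.\ by the union of the orbits of cardinality at most $2^i$, so that $X_0=X^G$ and $X_n=X$. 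The geometric mechanism behind everything is that local standardness makes $\pi$ restrict, over the relative interior of a face $P$, to a finite regular covering with deck group $G/H_P$ on which the isotropy subgroup $H_P\cong(\Zd)^{\rank P}$ acts trivially; consequently $H^*_G(\pi^{-1}(P\setminus\partial P);\bk)\cong H^*(P\setminus\partial P;\bk)\otimes_{\bk}H^*(BH_P;\bk)$ as $R$-modules, the action of $R$ factoring through the surjection $R\to H^*(BH_P;\bk)=R/I_P$, where $I_P$ is the ideal of $R$ generated by the $n-\rank P$ linear forms vanishing on $H_P$. By \cite{ptori}, $H^*_G(X;\bk)$ is a $j$-th syzygy over $R$ if and only if the Atiyah-Bredon complex $0\to H^*_G(X)\to H^*_G(X_0)\to H^{*+1}_G(X_1,X_0)\to\cdots$ is exact at its first $j$ positions, equivalently $\ext^k_R(H^G_*(X),R)=0$ for $1\le k\le j$. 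The task is therefore to rewrite the Atiyah-Bredon complex through the face poset of $M$.

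The first substantial step is a geometric identification of the Atiyah-Bredon complex. Using that $M=X/G$ is a manifold with $n$-corners in the sense of Definition~\ref{def:mc}, so that every face has a collar neighbourhood, together with excision, the covering structure of $\pi$ over the open faces, and an equivariant Thom isomorphism for the normal corner data of a face, I would exhibit a natural isomorphism of complexes of $R$-modules between the Atiyah-Bredon complex and the complex $C^{\bullet}$ given by
\[ C^i\;=\;\bigoplus_{\rank P=n-i}H^*(P,\partial P;\bk)\otimes_{\bk}(R/I_P), \]
with $R$ acting on the $P$-summand through $R/I_P$ and with differential assembled, over incident pairs of faces, from the connecting homomorphisms of the triples of faces appearing in the statement tensored with the natural (restriction) maps among the rings $R/I_P$ — the degree-one Stiefel-Whitney classes of the normal lines entering through the Thom isomorphisms. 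This is the exact analogue of the corresponding step of \cite{franzQuot}; the only differences are that degree-two Chern classes are replaced by degree-one Stiefel-Whitney classes, torus bundles by finite coverings, and that no orientation choices enter, since one works over $\bk$ throughout.

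It remains to do the algebraic bookkeeping. Because $I_P$ is generated by a regular sequence of $n-\rank P$ linear forms, $\ext^k_R(R/I_P,R)$ is concentrated in cohomological degree $k=n-\rank P$, where it is a degree shift of $R/I_P$; feeding this into the spectral sequence of the orbit filtration shows that it degenerates onto a single row, which is exactly the statement that the cohomology of $C^{\bullet}$ computes $\ext^{\bullet}_R(H^G_*(X),R)$. Then, as in \cite{franzQuot}, a multigrading and localization argument splits $C^{\bullet}$, as a complex of $R$-modules, into the direct sum over all faces $P$ of $M$ of the complexes $B^{\bullet}(P)$ of the statement, the $P$-summand occupying the range of degrees prescribed by $\rank P$. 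Combining this splitting with the syzygy characterization turns the condition "$\ext^k_R(H^G_*(X),R)=0$ for $1\le k\le j$" into "$H^i(B^{\bullet}(P))=0$ for every face $P$ and every $i>\max(\rank P-j,0)$"; the truncation $\max(\,\cdot\,,0)$ appears because $B^{\bullet}(P)$ is automatically exact below its bottom nonzero term, i.e.\ in the degrees where the corresponding summand contributes nothing to the Ext modules.

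I expect the geometric identification in the second paragraph to be the principal obstacle: constructing the equivariant Thom isomorphisms for the corner-normal data of the faces, keeping track simultaneously of the $R$-module and the residual $H^*(BH_P;\bk)$-module structures, and verifying that the Atiyah-Bredon coboundary coincides with the connecting-homomorphism differential defining the complexes $B^{\bullet}(P)$. As in \cite{franzQuot} this is the technical heart of the argument; passing to $\bk$-coefficients removes the sign and orientation subtleties of the torus case, but the bookkeeping with finite coverings — and with the fact that the action is nowhere free, the isotropy along the rank-$r$ stratum being $(\Zd)^r$ rather than trivial — still has to be handled with care. A secondary point is to confirm that the duality and finiteness properties of manifolds with corners used throughout remain valid over an arbitrary field $\bk$ of characteristic two, and that a locally standard $G$-action genuinely forces $X/G$ to be a nice manifold with $n$-corners in the sense of Definition~\ref{def:mc}, so that the collars the argument relies on are available.
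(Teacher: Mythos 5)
Your geometric step --- identifying each Atiyah--Bredon term $H^{*}_G(X^P,X^{\partial P})$ with $H^*(P,\partial P)\otimes R_P$ via equivariant Thom/Euler classes of the facet normal bundles, and then splitting the whole complex along the face poset --- is exactly what the paper does (Proposition \ref{lemA5.2} together with the decomposition $R_Q\cong\bigoplus_{Q\subseteq P}R_Pt_P$ in the proof of Theorem \ref{quotcrit}), so that part of the plan is sound.

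The gap is in the final translation. You invoke the characterization ``$H^*_G(X)$ is a $j$-th syzygy iff the Atiyah--Bredon complex is exact at its first $j$ positions, i.e.\ $\ext^k_R(H^G_*(X),R)=0$ for $k$ in a \emph{bottom} range,'' and then claim the direct-sum splitting turns this into ``$H^i(B^*(P))=0$ for $i>\max(\rank P-j,0)$.'' It does not: a direct sum of complexes converts low-degree vanishing of $H^*(AB^*_G(X))$ into low-degree vanishing of every $H^*(B^*(P))$, whereas the theorem asserts vanishing at the \emph{top} of each $B^*(P)$, in a range depending on $\rank P$ (for $j=1$ and $P=M$ of rank $n$ the theorem demands $H^n(B^*(M))=0$, not $H^1(B^*(M))=0$). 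These two conditions are equivalent only because each is equivalent to the syzygy condition, and establishing that is the substance of the theorem, not a formal consequence of the splitting. The paper bridges this with machinery your proposal omits: the depth-at-all-localizations criterion for syzygies (Remark \ref{algrmk}), reduced to the primes $\ker(H^*(BG)\to H^*(BK))$ to give depth bounds on $H^*_L(X^K)$ for every splitting $G\cong K\times L$ (Lemma \ref{lem434}); these are then converted, using $\depth M=\min\{k:\ext^{n-k}_R(M,R)\neq0\}$, equivariant Poincar\'e duality $H_*^{G}\cong H^*_{G}$, and the Ext description of the Atiyah--Bredon cohomology, into \emph{top-degree} vanishing of $H^i(AB^*_L(X^K))$ for $i>\max(\rank L-j,0)$ for all isotropy subgroups $K$ (Proposition \ref{depthcor}); only then is the face-poset splitting applied to each $X^P$, not just to $X$. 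Without this duality/localization step the stated per-face condition is asserted rather than proved. A secondary issue: you take the isotropy over the interior of $P$ to be $(\Zd)^{\rank P}$ and place the rank-$(n-i)$ faces in position $i$, which reverses the paper's convention $\rank P=\rank(G/G_P)$; since the inequality $i>\max(\rank P-j,0)$ in the statement uses the paper's convention, the bookkeeping as written would come out wrong even once the duality step is supplied.
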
	

As consequence of this result, we immediately recover the equivariant formality for $G$-spaces whose orbit space and its faces are  contractible; for example, the real locus of quasitoric manifolds whose orbit space is a simple polytope. This also provides a criterion to compute the syzygy order of the $(\zd)^n$-manifolds constructed in \cite{lu2008}, \cite{yu2012}.

This document is organized as follows: In section \ref{sec:1} we review equivariant cohomology for elementary abelian 2-groups (or 2-tori) and provide a characterization of syzygies in terms of decomposition of the subgroups of the 2-torus. In Section \ref{sec:2} we review the concept of manifolds with corners and in Section \ref{sec:3} we discuss locally standard 2-torus actions, provide a proof of the main results and discuss some consequences of them.

\textit{Acknowledgments.}  I would like to thank Matthias Franz for his ideas and collaboration on this work, as well as his comments on earlier versions of this document. I am also grateful to Christopher Allday, Matthias Franz and Volker Puppe for sharing an earlier copy of \cite{ptori} with me. This work is based on the author's doctoral thesis.

\section{Remark on syzygies in equivariant cohomology for 2-torus actions}\label{sec:1}

In this section, we review the characterization of syzygies in equivariant cohomology for actions of a group  $G \cong (\Zd)^n$ isomorphic to a $2$-torus of rank $n$ and cohomology with coefficient over a field of characteristic two $\bk$. We will omit the coefficient $\bk$ in our notation for cohomology. 

We start by reviewing the construction of the Atiyah-Bredon sequence for $2$-torus actions. The relation between syzygies in $G$-equivariant cohomology and the Atiyah-Bredon sequence, the $G$-equivariant homology and the equivariant Poincar\'e duality  has been developed in \cite{ptori}  where the authors generalize analogous results from the torus case \cite{AFP}. 
Let $G$ be a 2-torus of rank $n$ and $X$ be a $G$-space. The $i$-th $G$-equivariant skeleton of $X$ is the space $X_i$ defined as the union of orbits  of size at most $2^i$ for $-1 \leq i \leq n$. The skeletons of $X$ give rise to a filtration
\[ \emptyset = X_{-1} \subseteq X_0 \subseteq \cdots \subseteq X_r = X  \]
called the \textit{$G$-orbit filtration} of $X$. This filtration induces a complex 
\[ 0 \rightarrow H^*_G(X) \rightarrow  H_G^*(X_0) \rightarrow H_G^{*+1}(X_1, X_0) \rightarrow \cdots \rightarrow H_G^{*+n}(X_n, X_{n-1})\]
which is called the \textit{$G$-Atiyah-Bredon sequence} of $X$ and it will be denoted by $AB_G^*(X)$. We will show a characterization of syzygies in terms of the exactness of the Atiyah-Bredon sequence $AB_L(X^K)$ for any decomposition $G \cong K \times L$ analogously to \cite[\S 3]{franzQuot}. As the $G$-equivariant cohomology is a module over the polynomial ring $H^*(BG)$, we first review the following algebraic remark.

\begin{remark}\label{algrmk}
	Let $S$ be a polynomial ring over some field in $n$-variables of positive degree, and let $\mathfrak{m}$ be the maximal homogeneous ideal of $S$. For a graded finitely generated $S$-module $M$, the length of a maximal $M$-sequence of elements in $\mathfrak{m}$  is denoted by $\depth_S M$. It is related to the Ext functor via the formula \[\depth_S M =\min \{ k : \ext_S^{n-k}(M,S)\}. \]
	See \cite[Prop.A1.16]{eisenbud}. On the other hand, the depth of $M$ and the syzygy order of $M$ are related as follows: $M$ is a $j$-th syzygy over $S$ if and only if for any prime ideal $\mathfrak{p}\subseteq S$, $\depth_{S_\mathfrak{p}} M_\mathfrak{p} \geq \min(j, \dim S_\mathfrak{p})$ \cite[\S 16.E]{bruns2006determinantal}.
\end{remark}
Now we proceed to prove the following result.

\begin{lemma}\label{lem434}\quad
	Let $X$ be a $G$-space. The $G$-equivariant cohomology $H^*_G(X)$ is a $j$-th syzygy over  $R$ if and only if for any decomposition $G \cong K \times L$ into two tori $K$ and $L$ it holds that
	\[\depth_{R_L} H_L^*(X^K)\geq \min(j, \rank L).\]
	\begin{proof}
		The proof given in \cite[Prop.3.3]{franzQuot} is purely algebraic and carries over to our setting. It mainly uses the characterization of syzygies via depth as in Remark \ref{algrmk}, and that one can restrict only to the prime ideals corresponding to those arisen as the kernel of the restriction map $H^*(BG)\rightarrow H^*(BK)$ for any subgroup $K \subseteq G$.  Compare with \cite[Lem.8.1]{ptori}.
	\end{proof}
\end{lemma}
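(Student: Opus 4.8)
The plan is to carry out in detail the purely algebraic argument that the author compresses into a citation, the only topological inputs being the localization theorem and the Künneth isomorphism for the fixed-point data over $\bk$, both available from \cite{ptori}. Write $R = H^*(BG)$, which in characteristic two is the polynomial ring $\bk[t_1,\dots,t_n]$ on generators of degree one; in particular $R$ is regular. By Remark~\ref{algrmk}, $M := H^*_G(X)$ is a $j$-th syzygy over $R$ if and only if $\depth_{R_\mathfrak{p}} M_\mathfrak{p} \geq \min(j, \dim R_\mathfrak{p})$ for every prime $\mathfrak{p} \subseteq R$. Since $M$ is a finitely generated graded module it suffices to test this at homogeneous primes, and at primes outside $\mathrm{Supp}(M)$ the condition is vacuous because $M_\mathfrak{p}=0$. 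The heart of the argument is therefore to evaluate the left-hand side at the homogeneous primes in the support and to show that everything is governed by the ``linear'' primes $\mathfrak{p}_K := \ker\!\bigl(H^*(BG) \to H^*(BK)\bigr)$ attached to the subgroups $K \subseteq G$.

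First I would record the local structure of these primes. For a splitting $G \cong K \times L$ the tensor factorization $R = H^*(BK) \otimes_\bk H^*(BL)$ identifies $\mathfrak{p}_K$ with the ideal generated by the positive-degree part of $R_L := H^*(BL)$, so that $R/\mathfrak{p}_K \cong H^*(BK)$ and $\mathrm{ht}(\mathfrak{p}_K) = \dim R_{\mathfrak{p}_K} = \rank L$. By Quillen stratification over $\bk$, every homogeneous prime $\mathfrak{p} \in \mathrm{Supp}(M)$ has a well-defined associated subgroup $K$ with $\mathfrak{p} \supseteq \mathfrak{p}_K$, and the localization theorem of \cite{ptori} produces a restriction isomorphism $M_\mathfrak{p} \cong H^*_G(X^K)_\mathfrak{p}$. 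Because $K$ acts trivially on $X^K$, the Borel construction splits off a factor of $BK$ and the Künneth theorem gives $H^*_G(X^K) \cong H^*(BK) \otimes_\bk H^*_L(X^K) \cong H^*_L(X^K) \otimes_{R_L} R$, exhibiting $M_\mathfrak{p}$ as a localization of the base change of the $R_L$-module $N := H^*_L(X^K)$ along the free extension $R_L \hookrightarrow R$.

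Next I would run the depth computation. The induced map $(R_L)_{\mathfrak{m}_L} \to R_\mathfrak{p}$, where $\mathfrak{m}_L = R_L^{>0}$, is a flat local homomorphism whose closed fibre is $\bigl(H^*(BK)\bigr)_{\bar{\mathfrak{p}}}$, a regular (hence Cohen--Macaulay) local ring of dimension $\mathrm{ht}(\bar{\mathfrak{p}})$, with $\bar{\mathfrak{p}}$ the image of $\mathfrak{p}$ in $R/\mathfrak{p}_K \cong H^*(BK)$. The base-change formula for depth along a flat local map then yields
\[ \depth_{R_\mathfrak{p}} M_\mathfrak{p} = \depth_{R_L} N + \mathrm{ht}(\bar{\mathfrak{p}}), \qquad \dim R_\mathfrak{p} = \rank L + \mathrm{ht}(\bar{\mathfrak{p}}). \]
At the generic point $\mathfrak{p} = \mathfrak{p}_K$ one has $\mathrm{ht}(\bar{\mathfrak{p}}) = 0$, so $\depth_{R_{\mathfrak{p}_K}} M_{\mathfrak{p}_K} = \depth_{R_L} H^*_L(X^K)$ and $\dim R_{\mathfrak{p}_K} = \rank L$. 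Setting $d = \depth_{R_L} N$ and $h = \mathrm{ht}(\bar{\mathfrak{p}})$, a short case check shows that $d \geq \min(j, \rank L)$ forces $d + h \geq \min(j, \rank L + h)$ for all $h \geq 0$: if $j \leq \rank L$ then $d \geq j$, whence $d+h \geq j \geq \min(j,\rank L + h)$; otherwise $d \geq \rank L$, whence $d + h \geq \rank L + h \geq \min(j, \rank L + h)$. Thus the depth condition at every prime with associated subgroup $K$ is equivalent to the single condition at the generic prime $\mathfrak{p}_K$, namely $\depth_{R_L} H^*_L(X^K) \geq \min(j, \rank L)$; ranging over all $K$, equivalently over all splittings $G \cong K \times L$, gives the stated equivalence.

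I expect the main obstacle to be the local identification $M_\mathfrak{p} \cong H^*_G(X^K)_\mathfrak{p}$ for an \emph{arbitrary} homogeneous prime rather than only for the generic $\mathfrak{p}_K$: this is exactly Quillen stratification together with the localization theorem in the $2$-torus setting, and one must verify the hypotheses (compactness or finite type of $X$, and finite generation of $H^*_G(X)$ and of $H^*_L(X^K)$ over the respective polynomial rings) under which they hold over $\bk$; compare \cite[Lem.~8.1]{ptori}. Once that identification and the regularity of the fibre $\bigl(H^*(BK)\bigr)_{\bar{\mathfrak{p}}}$ are in place, the flat base-change depth formula and the monotonicity of $\min$ are routine, and the proof reduces to the bookkeeping above.
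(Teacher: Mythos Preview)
Your proposal is correct and follows exactly the route the paper indicates: you unpack the purely algebraic argument of \cite[Prop.~3.3]{franzQuot}, using the depth characterization of syzygies from Remark~\ref{algrmk}, the reduction to the linear primes $\mathfrak{p}_K$ via the localization theorem and Quillen stratification (as in \cite[Lem.~8.1]{ptori}), and the flat base-change depth formula along $(R_L)_{\mathfrak{m}_L}\to R_\mathfrak{p}$. This is precisely the argument the paper compresses into its two-sentence citation, so there is nothing to contrast.
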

The next proposition uses the  $G$-equivariant homology of $X$ and the equivariant extension of the Poincar\'e duality for $2$-torus actions. See \cite{ptori} for a wider discussion on equivariant homology and the equivariant Poincar\'e duality.

\begin{proposition}\label{depthcor}
	Let $X$ be a $G$-manifold and $j\geq 0$. Then $H^*(X)$ is a $j$-th syzygy if and only if $H^i(AB^*_L(X^K))= 0$ for any subgroup $K$ occurring as an isotropy subgroup in $X$  where $L = G/K$ and $i > \max(\rank L - j, 0)$.
\end{proposition}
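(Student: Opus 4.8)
The plan is to combine Lemma~\ref{lem434} with a description of the Ext modules $\ext_{R_L}(H_*^L(X^K), R_L)$ in terms of the cohomology of the Atiyah--Bredon complex $AB^*_L(X^K)$, passing through the equivariant Poincar\'e duality. First I would recall from \cite{ptori} that for a $G$-manifold $Y$ (here $Y = X^K$, which is itself a manifold with a residual action of $L = G/K$), equivariant Poincar\'e duality identifies $H^*_L(Y)$ up to a degree shift with the equivariant homology $H_*^L(Y)$, and, crucially, identifies the Atiyah--Bredon complex $AB^*_L(Y)$ with a complex computing $\ext_{R_L}^\bullet(H_*^L(Y), R_L)$; concretely $H^i(AB^*_L(Y)) \cong \ext_{R_L}^i(H_*^L(Y), R_L)$ up to the appropriate reindexing. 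This is the exact analogue of the torus statement used in \cite[\S 3]{franzQuot}, and the only place the manifold hypothesis enters.

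Next I would translate the depth condition of Lemma~\ref{lem434} into a vanishing-of-Ext condition via the formula in Remark~\ref{algrmk}: $\depth_{R_L} M \ge d$ is equivalent to $\ext_{R_L}^{\ell - k}(M, R_L) = 0$ for all $k < d$, where $\ell = \rank L$; equivalently, $\ext_{R_L}^i(M, R_L) = 0$ for all $i > \ell - d$. Applying this with $M = H_*^L(X^K)$ and $d = \min(j, \rank L)$, we get that $H^*_G(X)$ is a $j$-th syzygy if and only if for every decomposition $G \cong K \times L$ we have $\ext_{R_L}^i(H_*^L(X^K), R_L) = 0$ for $i > \ell - \min(j,\ell) = \max(\ell - j, 0)$. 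Combining with the Poincar\'e-duality identification of the previous step, this reads $H^i(AB^*_L(X^K)) = 0$ for $i > \max(\rank L - j, 0)$.

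Finally I would justify the reduction from "all decompositions $G \cong K \times L$" to "all $K$ occurring as an isotropy subgroup": if $K$ is not an isotropy subgroup of $X$, then $X^K = X^{K'}$ for the isotropy subgroup $K'$ generated by (the intersection of) the isotropy groups containing $K$, and one checks that the relevant vanishing condition for the pair $(K, G/K)$ follows from — or is implied by — that for $(K', G/K')$; this is the same bookkeeping as in \cite[Lem.~8.1]{ptori} and \cite[\S 3]{franzQuot}. Since every subgroup of the $2$-torus $G$ is a direct factor, the decomposition $G \cong K \times L$ exists for every such $K$, so no generality is lost.

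I expect the main obstacle to be making the Poincar\'e-duality identification of $AB^*_L(X^K)$ with the Ext complex precise over a field of characteristic two — in particular handling the degree shifts correctly and checking that the relevant orientation/duality hypotheses from \cite{ptori} genuinely apply to the fixed-point submanifolds $X^K$ with their residual $L$-actions. Once that identification is in hand, the rest is a formal combination of Lemma~\ref{lem434}, Remark~\ref{algrmk}, and the isotropy reduction.
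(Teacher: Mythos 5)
Your proposal is correct and follows essentially the same route as the paper: Lemma~\ref{lem434} plus the depth--Ext formula of Remark~\ref{algrmk}, the identification $H^i(AB^*_L(Y)) \cong \ext^i_{R_L}(H_*^L(Y),R_L)$ from \cite{ptori}, equivariant Poincar\'e duality, and a reduction to isotropy subgroups. The paper carries out that last reduction slightly more concretely --- componentwise, taking $K'$ to be the principal isotropy group on each connected component $Y$ of $X^K$ and chaining depth inequalities through $\depth_{R_{L'}} H^*_{L'}(X^{K'}) + k$ --- but this is the same bookkeeping you sketch.
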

\begin{proof}
	The proof from the torus case \cite[Cor.3.4]{franzQuot} can be also adapted to our setting. As discussed in \cite[Thm.8.3]{ptori}, we may also assume that $\bk = \F_2$. Let $K$ be a subgroup of $G$. Then $X^K$ is a closed submanifold of $X$ by the tubular neighbourhood theorem. For a connected component $Y \subseteq X^K$, there is  a principal orbit $G/G_x$ where $x \in Y$, so $K \subseteq G_x$ as subgroup. Set $K' = G_x$. $L'= G/K'$ and write $\rank K' = \rank K + k$ for some integer $k \geq 0$. Using Lemma \ref{lem434} we get 
	\begin{align*}
	\depth_{R_L} H^*_L(Y) &\geq \depth_{R_L} H^*_L(X^{K'}) \\
	&= \depth_{R_L'} H^*_{L'}(X^{K'}) + k \\
	&\geq \min(j,\rank L')+k \geq \min(j,\rank L).
	\end{align*}
	Following \cite[Thm.8.9]{ptori} and analogous to \cite[Thm.4.8]{AFP} for the torus case, we have that the cohomology of the $G$-Atiyah-Bredon complex of $X$ is isomorphic to the ext of the equivariant homology of $X$; namely, $H^i(AB^*_G(X)) \cong \ext^i_R(H_*^G(X),R) $ for any $i \geq 0$. This implies that
	\begin{align*}
	\depth_{R_L} H^*_L(X^K) &= \min \{ i : \ext_{R_L}^{\rank L  - i}(H^*_L(X^K), R_L) \neq 0\} \\
	&= \min \{ i : H^{\rank L  - i}(AB^*_L(X^K)) \neq 0\}
	\end{align*}
	by combining also the equivariant Poincar\'e duality isomorphism $H_*^G(X)\cong  H^*_G(X)$.
	
	\medskip
	
	Therefore, $\depth_{R_L} H^*_L(X^K) \geq \min(j,\rank L)$ if and only if $H^i(AB^*_L(X^K))= 0\text{ for all } i > \max(\rank L -j,0). $ \qedhere
\end{proof}
We finish this section with the following result (compare with \cite[Prop.3.3]{franzQuot}).
\begin{proposition}\label{teo2.1}
	If $H^*_G(X)$ is a $j$-th syzygy over $R$ then so is $H^*_L(X^K)$ over $R_L$ for any subgroup $K \subseteq G$ and complementary subgroup $L \subseteq G$. Furthermore, $L$ can be canonically identified with the quotient $G/K$.
\end{proposition}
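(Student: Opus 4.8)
The plan is to deduce the statement from the depth characterisation of Lemma \ref{lem434}, using the elementary fact that any splitting of the complement $L$ extends to a splitting of $G$.

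First I would dispose of the identification $L \cong G/K$. Since $G$ is an elementary abelian $2$-group and $L$ is a complementary subgroup of $K$, we have $K \cap L = \{1\}$ and $KL = G$, so the composite $L \hookrightarrow G \twoheadrightarrow G/K$ is an isomorphism. Applying the classifying space functor this gives a compatible isomorphism of polynomial rings $R_L = H^*(BL) \xrightarrow{\ \sim\ } H^*(B(G/K))$ under which the $L$-equivariant and $(G/K)$-equivariant cohomologies of $X^K$ are identified together with their module structures. Hence it suffices to show that $H^*_L(X^K)$ is a $j$-th syzygy over $R_L$.

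Next I would apply Lemma \ref{lem434} to the $2$-torus $L$ acting on the $L$-space $X^K$ (note $L$ preserves $X^K$ because $G$ is abelian): this module is a $j$-th syzygy over $R_L$ if and only if for every decomposition $L \cong K' \times L'$ into subtori one has $\depth_{R_{L'}} H^*_{L'}\bigl((X^K)^{K'}\bigr) \geq \min(j,\rank L')$. Now let $K'' \subseteq G$ be the subgroup generated by $K$ and $K'$; since $K' \subseteq L$ we get $K \cap K' = \{1\}$, so $K'' \cong K \times K'$ and $(X^K)^{K'} = X^{K''}$. Moreover $G = KL = K''L'$ with $K'' \cap L' = \{1\}$, so $L'$ is a complementary subgroup of $K''$ in $G$ and the decomposition $G \cong K'' \times L'$ is one of those considered in Lemma \ref{lem434} for $X$. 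The hypothesis that $H^*_G(X)$ is a $j$-th syzygy over $R$ therefore gives, via the ``only if'' direction of that lemma, $\depth_{R_{L'}} H^*_{L'}(X^{K''}) \geq \min(j,\rank L')$, which is exactly the inequality required above; the ``if'' direction of the lemma applied to $L$ then concludes.

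I do not expect a real obstacle here; the only points demanding care are purely formal: checking that the several $R_{L'}$-module structures obtained from the two applications of Lemma \ref{lem434} genuinely agree (they do, because $R_{L'} \subseteq R_L \subseteq R$ compatibly under the chosen splittings), and checking the set-theoretic identity $(X^K)^{K'} = X^{K''}$. Everything else is the bookkeeping of the first paragraph, and the argument follows the same lines as \cite[Prop.~3.3]{franzQuot}.
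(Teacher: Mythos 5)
Your proof is correct and follows essentially the same route as the paper: both apply Lemma \ref{lem434} to a decomposition $L \cong K' \times L'$, form the subgroup $K'' \cong K \times K'$ (called $K_0$ in the paper), identify $(X^K)^{K'} = X^{K''}$, and transfer the depth bound from the decomposition $G \cong K'' \times L'$ back to $L$. The only addition is your explicit verification that $L \to G/K$ is an isomorphism, which the paper leaves implicit.
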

\begin{proof}
	Let $K \subseteq G$, $Y = X^K$ and $L$ be a complementary subgroup to $K$. We will show that the condition of Lemma \ref{lem434} holds for $H^*_L(Y)$. Let $K' \subseteq L$ and choose a complementary subgroup  $L' \subseteq L$ of $K'$ in $L$. Notice that $K_0 \cong K \times K'$ is a complementary subgroup of $L'$ in $G$, and we have that $Y^{K'} = (X^K)^{K'} = X^{K_0}$.
	%
	Applying then Lemma \ref{lem434} to the subgroup $K_0 \subseteq G$, we get that
	\begin{align*}
	\depth_{R_{L,L'}}H^*_{L'}(Y^{K'}) &= \depth_{R_{L'}}H^*_{L'}(X^{K_0}) \geq \min(j,\rank L')
	\end{align*}
	showing that $H^*_L(Y)$ is a $j$-th syzygy over $R_L$ again by Lemma \ref{lem434}.
\end{proof}

\section{Remark on manifolds with corners}\label{sec:2}

In this section we review the notion of manifolds with corners that generalizes the concept of manifolds and manifolds with boundary in the classical setting. They were firstly developed in \cite{cerf} and \cite{douady} in differential geometry and have  been used in transformation groups on smooth manifolds \cite{janich}, cobordism  \cite{laures} and toric topology \cite{taraspanov}.

Recall that a topological manifold of dimension $n$ is locally modeled by $\R^n$, a manifold with boundary is modeled by the half space $  [0,\infty]\rtimes\R^{n-1}$ and a manifold with corners with be modeled by the intersection of (zero or more) half spaces in $\R^n$ as we state in the following definition.

\begin{definition}\label{def:mc}
	Let $M$ be a paracompact Hausdorff space and $m \geq n \geq 0$ be integers. We say that {$M$ is an $m$-manifold with $n$-corners} if $M$ has an atlas $\{(U_i, \varphi_i )\}$ where $\varphi_i \colon U_i \rightarrow V_i$ is a homeomorphism of $U_i$ onto an open subset $V_i$ or $R_{m,n} := [0, \infty)^n \times \R^{m-n}$, and the map $\varphi_i\circ \varphi_j^{-1} \colon \varphi_j(U_i \cap U_j) \rightarrow \varphi_i(U_i \cap U_j)$ is the restriction  of a diffeomorphism between open sets in $\R^m$ for all $i,j$.
\end{definition}
Even though we provide a general definition, in this document we will only discuss compact spaces for simplicity. Examples of manifolds with corners include manifolds, manifolds with boundary and convex simple polytopes. More interesting examples of spaces that are manifolds with corners are given by the following figures: the teardrop and the eye-shaped space.

\begin{figure}[h]\label{fig1}
	\centering
	\includegraphics[width=0.5\textwidth]{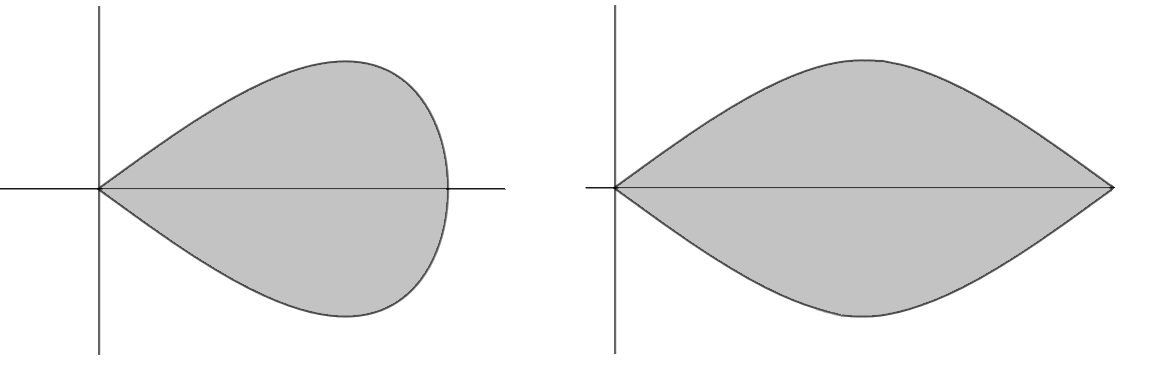}
	\caption{Examples of manifold with corners: Teardrop and Eye-shaped figure}
\end{figure}

For any $m$-manifolds with $n$-corners $M$, the boundary $\partial M$ becomes a $(m-1)$-manifold with $(n-1)$-corners. Moreover, we can filter $M$ in the following way. For any $z = (x,y)  \in \R_{m,n}$, let $c_z$ be the number of zero coordinates of $x$ in $[0, \infty)^n$. If $M$ is an $m$-manifold with $n$-corners, then $c_x$ is well-defined for any $x \in M$. We say that $F$ is a facet of $M$ if $F$ is the closure  of a connected component of the subspace $M_1 = \{ x \in M \colon c_x = 1 \}$. Notice that $F$ is an $(m-1)$-dimensional submanifold with boundary of $\partial M$ and $\bigcup_{F\,\text{facet}} F = \partial M$.   Moreover, any finite intersection of facets $\bigcap_{i =1}^k F_i$ is either empty, or a disjoint union of  submanifold of $M$ of codimension $k$. Analogously, a face of $M$ of codimension $k$ is defined as the closure of a connected component of the subspace $M_{k} = \{ x \in M : c_x = k \}$. 
\begin{remark}
	Any manifold with corners become a filtered space by setting $X_i = \bigcup_{k \leq i} M_{n-i}$, so $X_i$ consists of all faces of codimension at least $n-i$. In particular,  $X_0 = M_n$ and $X_n = M$.
\end{remark}

For example, a face of $\R_{m,n} = [0, \infty)^n \times \R^{m-n}$ of codimension $k$ is the subspace $A_I = \{ (x,y) \in \R_{m,n} : x_i = 0\  \text{for\;} i \notin I \}$ for some $I \subseteq \{1,\ldots, n\}$, $|I|= k$. 

\begin{definition}
Let $M$ be a manfold with corners. We say that $M$ is a nice manifold with corners if for any face $P$ of $M$ there are exactly $k$-facets $F_1,\subseteq, F_k$ such that $P$ is a connected component of the intersection $\bigcap_{i=1}^k F_i$.
\end{definition}

For example, in the spaces from  Figure \ref{fig1}, we have that the teardrop is a 2-manifold with 2-corners that is not nice but the eye-shaped figure it is a nice 2-manifolds with 2-corners. For our interests, we will focus on nice manifolds with corners as they generalize the notion of simple polytopes in toric topology as they will show up as the orbit space of locally standard actions as discussed in the following section.

\section{The quotient criterion for locally standard 2-torus actions}\label{sec:3}

In this section we discuss locally standard 2-torus actions on manifolds as their quotients will be nice manifolds with corners and then we will prove the quotient criterion for these particular actions analogously to \cite{franzQuot} for locally standard torus actions. Recall that we are considering cohomology with coefficients over a field $\bk$ of characteristic two and we will omit it in our notation.

We start by reviewing the standard action on $\R^m$. Let $G$ be a $2$ torus of rank $n$ with $n \leq m$. The \textit{standard action} of $G$ on $\R^m$ is defined as follows: Identifying $\Zd = \{ \pm 1 \}$, we have a canonical action of $G$ on $\R^m$ given by
\[ (g_1,\ldots, g_n) \cdot (x_1, \ldots, x_n, x_{n+1}, \ldots, x_m ) = (g_1x_1, \ldots, g_nx_n, x_{n+1}, \ldots, x_m) \]  
and thus the quotient space $\R^m / G \cong \R_{m,n} = [0,\infty)^n \times \R^{m-n}$ is a manifold with corners. This leads to the following definition.

\begin{definition}
	Let $G$ be a $2$-torus of rank $n$ and $X$ be a $G$-manifold of dimension $m$ with $m \geq n$. A \textit{$G$-standard chart} $(U, \varphi)$ of $x \in X$  is  a $G$-invariant open neighbourhood $U$ of $x$ in $X$  and a $G$-equivariant homeomorphism $\varphi \colon U \rightarrow V$ on some $G$-invariant open set $V \subseteq \R^m$ (with the standard action defined above).  We say that $X$ is a \textit{locally standard $G$-manifold} (or that the $G$-action is locally standard) if $X$ has an atlas $\{(U_i, \varphi_i)\}$ consisting of standard charts such that the change of coordinates $\varphi_i\circ \varphi_j^{-1} \colon \varphi_j(U_i \cap U_j) \rightarrow \varphi_i(U_i \cap U_j)$ is a $G$-equivariant diffeomorphism for all $i,j$.
\end{definition} 
Under these assumptions, one can check that the quotient space $M = X/G$ becomes an $m$-manifold with $n$-corners.  Let $\pi\colon X\rightarrow X/G$ denote the  quotient map. For any subspace $A \subseteq X/G$ we write $\pi^{-1}(A) = X^A$; however, we will identify $X^G$ with its image in $X/G$. 

Let $P$ be a face of $M$. Notice that all points in $X$ lying over the interior of $P$ have a common isotropy group $G_P \subseteq G$. We denote by $G_P^* = G/G_P$ which is isomorphic to a complementary $2$-torus of $G_P$ in $G$. Also, we write $\rank P = \rank G_P^*$. Observe that $X^P = \inv{\pi}(P)$ is a connected component of $X^{G_P}$ and the set $X^{P \setminus \partial P} = X^{P}\setminus X^{\partial P}$ is the open subset of $X^P$ where $G_P^*$ acts freely. This implies that 
\begin{equation}\label{equation423}
H_{G_P^*}(X^{P}, X^{\partial P}) \cong H^{*}(P, \partial P).
\end{equation}

For two faces $P,\  Q$ of $X/G$ we write $P \subseteq_1 Q$ if $P \subseteq Q$ and $\rank Q = \rank P +1$. If $F \subseteq_1 X/G$, then $X^F$ is a closed submanifold of $X$ of codimension $1$ and we denote by $N_F$ its normal bundle. For any face $P \subseteq F$, consider $E_{F.P}$ the vector bundle over $X^{P \setminus \partial P}$ obtained as the pullback of $N_F$ under the inclusion of $X^{P \setminus \partial P}$ on $X^F$.  Under the inclusion $(X^P,X^{\partial P}) \rightarrow (E_{F,P}, E_{F,P}^0)$, the equivariant Thom class of $E_{F,P}$ induces a class in the equivariant cohomology $e_{F,P} \in H_G^1(X^P, X^{\partial P})$. Finally, we denote by $t_{F,P} \in H^*(BG_p)$ the restriction of $e_{F, P}$  under the inclusion $(pt, \emptyset) \rightarrow (X^P, X^{\partial P})$. Notice that $t_{F,P}$ is the equivariant Euler class of the $G_p$-equivariant line bundle over a point which corresponds to the generator of  $H^*(BG_P)$ in $H^1(BG_P)$.

\begin{remark}
	For any face $P$ of $X/G$, consider the $\bk$-algebra $R_P  = H^*(BG_P)$. Then $P$ is a connected component of the intersection $\bigcap_{P \subseteq F \subseteq_1 X/G} F$ and thus $X^P$ is a connected component of the intersection $\bigcap_{P \subseteq F \subseteq_1 X/G} X^F$. Moreover, for any point $x \in X^{P\setminus \partial P}$, there is an isomorphism $G_P \cong \prod_{P \subseteq F \subseteq_1 X/G} G_F$ by looking to a standard chart of $x$ in $X$. This implies that $t_{F,P}$ is a basis for the vector space $H^1(BG_P)$ which extends to an isomorphism of algebras
	\[ R_P \cong \bk[t_{F,P} : P \subseteq F \subseteq_1 X/G ]\]
\end{remark}

If $P \subseteq Q$, $G_Q \subseteq G_P$ and we have a canonical map $\rho_{PQ} \colon R_P \rightarrow R_Q$. It follows from the naturality of the Euler class and the above remark that $\rho_{PQ}(t_{F,P}) = t_{F,Q} \;\text{if}\;\; Q\subseteq F$ and 0 otherwise. Now we will proceed to prove the following lemma.

\begin{proposition}\label{lemA5.2}
	Let $P$ be a face in $X/T$.
	\begin{enumerate}[(i)]
		\item The composition
		\[ \phi_P\colon H^*(P, \partial P) \xrightarrow{\cong} H^*_{G_P^*}(X^P, X^{\partial P}) \rightarrow H^*_G(X^P, X^{\partial P}) \]
		induces a map $\psi_P \colon H^*(P, \partial P) \otimes R_P \rightarrow H_G^*(X^P, X^{\partial{P}})$ which is an isomorphism of graded vector spaces.
		\item  If $P \subseteq_1 Q$ the following diagram
		\[
		\begin{tikzcd}
		H^*(P, \partial P) \otimes R_P \dar[swap]{\delta \otimes \rho_{PQ}} \rar{\psi_P} & H^*_G(X^P, X^{\partial P})\dar{\delta} \\
		H^{*+1}(Q, \partial Q)\otimes R_Q \rar{\psi_Q} & H^{*+1}_G(X^Q, X^{\partial{Q}}) 
		\end{tikzcd}
		\]
		is commutative where $\delta$ is the  connecting homomorphism arisen from the cohomology long exact sequence of the triple  $(Q, \partial Q, \partial Q \setminus (P \setminus \partial P) )$.
	\end{enumerate}
\end{proposition}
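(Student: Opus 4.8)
The plan is to build the isomorphism $\psi_P$ and verify naturality by working locally over a standard chart, using the explicit description of $R_P$ as a polynomial algebra on the Euler classes $t_{F,P}$ from the preceding remark. For part (i), start from the composition $\phi_P$: the first arrow is the isomorphism \eqref{equation423} identifying $H^*_{G_P^*}(X^P,X^{\partial P})$ with $H^*(P,\partial P)$, which holds because $G_P^*$ acts freely on $X^{P\setminus\partial P}$. The second arrow is the change-of-groups map along $G\to G_P^*=G/G_P$. I would then extend $\phi_P$ to a map on $H^*(P,\partial P)\otimes R_P$ by letting $R_P$ act through the module structure of $H^*_G(X^P,X^{\partial P})$ over $H^*(BG)$: the composite $H^*(BG)\to H^*(BG_P)=R_P$ is split surjective (choice of complement), so $H^*_G(X^P,X^{\partial P})$ is naturally a module over $R_P$, and $\psi_P(\alpha\otimes r):=r\cdot\phi_P(\alpha)$. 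To see this is an isomorphism of graded vector spaces, invoke the Leray--Hirsch / Serre spectral sequence argument: the fibration $X^{P\setminus\partial P}\times_G EG \to BG_P$ (coming from the free $G_P^*$-action) together with the fact that $H^*_G(X^P,X^{\partial P})$ restricted to the fibre is exactly $H^*_{G_P^*}(X^P,X^{\partial P})$; since $R_P=H^*(BG_P)$ is a free (polynomial) $\bk$-module and the spectral sequence degenerates, one gets the graded vector space isomorphism $H^*_G(X^P,X^{\partial P})\cong H^*_{G_P^*}(X^P,X^{\partial P})\otimes R_P$. Equivalently, one can argue directly that $X^P\times_G EG \simeq (X^P\times_{G_P^*}E G_P^*)\times BG_P$ relatively to the boundary.

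For part (ii), the point is naturality of both the change-of-groups maps and the connecting homomorphisms. Fix $P\subseteq_1 Q$ with corresponding codimension-one face $F$, so $G_Q\subseteq G_P$ with $G_P/G_Q\cong G_F\cong\Zd$, and $\rho_{PQ}\colon R_P\to R_Q$ is the projection killing $t_{F,P}$ (established in the excerpt). The connecting map $\delta$ on the equivariant side comes from the long exact sequence of the triple $(X^Q,X^{\partial Q},X^{\partial Q}\setminus X^{P\setminus\partial P})$; the non-equivariant $\delta$ comes from the analogous triple downstairs. These fit into a commuting ladder by the naturality of the Bockstein/connecting homomorphism under the change-of-groups map (which is induced by a map of pairs of Borel constructions). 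So the square commutes up to checking that $\psi$ intertwines the $R$-module structures with $\rho_{PQ}$ — i.e. that $\delta\circ\psi_P = \psi_Q\circ(\delta\otimes\rho_{PQ})$ rather than $\psi_Q\circ(\delta\otimes\mathrm{id})$. This is precisely where the factor $t_{F,P}$ must die: the class $e_{F,P}\in H^1_G(X^P,X^{\partial P})$ is a relative Euler/Thom class of the normal bundle $N_F$, and passing from $P$ into the deeper face $Q\subseteq F$ corresponds (under $\delta$) to intersecting with $X^F$, which sends the Thom class to $1$ and hence sends $t_{F,P}$ to $0$ while the complementary generators map to the $t_{\bullet,Q}$. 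I would make this precise by comparing $\delta(e_{F,P}\cdot\phi_P(\alpha))$ with $\phi_Q(\delta\alpha)$ using the multiplicativity of $\delta$ with respect to the $H^*(BG)$-module structure and the compatibility of Thom classes under restriction, i.e. $\delta(e_{F,P})$ restricts correctly.

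The main obstacle I expect is the bookkeeping in part (ii): one must pin down exactly which triple yields $\delta$ on each side and check that the square of pairs
\[
(X^Q,X^{\partial Q},X^{\partial Q}\setminus X^{P\setminus\partial P})\ \longrightarrow\ (Q,\partial Q,\partial Q\setminus(P\setminus\partial P))
\]
is compatible with the $G_Q^*$-action and with the inclusions $X^P\hookrightarrow X^Q$, so that the connecting homomorphisms genuinely agree after the identifications of part (i). A clean way to handle this is to reduce to a single $G$-standard chart around an interior point of $P$ inside $X^Q$: there, $X^P$, $X^Q$, $X^F$ and the relevant normal bundle are all the concrete coordinate subspaces of $\R^m$, $N_F$ is a trivial line bundle, $e_{F,P}$ and the $t_{F,P}$ are the explicit polynomial generators, and the two long exact sequences become the Gysin/Thom sequences of a product $[0,\infty)\times(\text{rest})$. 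The local computation then gives the commuting square, and a Mayer--Vietoris / partition-of-unity patching argument (using that the isomorphisms $\psi_P,\psi_Q$ are globally defined and the ladder is natural) upgrades it to the global statement.
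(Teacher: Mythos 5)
Your overall architecture matches the paper's, but there is a genuine gap at the single most important point: the definition of $\psi_P$. You define $\psi_P(\alpha\otimes r)=r\cdot\phi_P(\alpha)$, where $R_P$ acts on $H^*_G(X^P,X^{\partial P})$ through a chosen splitting $\sigma\colon R_P\to H^*(BG)$. With that definition part (i) is indeed immediate (your $\psi_P$ is precisely the inverse of the K\"unneth isomorphism $\rho\colon H^*_G(X^P,X^{\partial P})\to H^*(P,\partial P)\otimes R_P$, so your Leray--Hirsch argument works), but part (ii) fails for it in general. By $H^*(BG)$-linearity of $\delta$ one gets $\delta(\sigma(t_{F,P})\,\phi_P(\alpha))=\sigma(t_{F,P})\,\phi_Q(\delta\alpha)$, and the discrepancy between this and $\psi_Q(\delta\alpha\otimes\rho_{PQ}(t_{F,P}))$ is multiplication by a degree-one class of $H^*(BG)$ restricting to zero on $BG_Q$; such a class acts on the image of $\phi_Q$ through a characteristic class of the free $G_Q^*$-bundle $X^{Q\setminus\partial Q}\to Q\setminus\partial Q$, which has no reason to vanish. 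The paper instead defines $\psi_P(\alpha\otimes t_{F_1,P}^{m_1}\cdots t_{F_k,P}^{m_k})=\phi_P(\alpha)\,e_{F_1,P}^{m_1}\cdots e_{F_k,P}^{m_k}$ using the geometric Thom classes $e_{F,P}$; these differ from $\sigma(t_{F,P})$ exactly by the correction terms $a_F\in H^1(P,\partial P)$ appearing in $\rho(e_{F,P})=t_{F,P}+a_F$. Your discussion of (ii) silently switches to this geometric map (you compute with $\delta(e_{F,P}\cdot\phi_P(\alpha))$), so the two halves of your argument are about two different maps.

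If you adopt the geometric definition, which is what (ii) requires, then (i) is no longer automatic and needs the step you have omitted: one shows $\rho\circ\psi_P(\alpha\otimes t^{m})=\alpha\otimes t^{m}+S$, where $S$ has strictly lower degree in the $R_P$-factor, so $\rho\circ\psi_P$ is unitriangular with respect to this filtration and hence bijective. This triangularity computation is the crux of the paper's proof of (i). Your sketch of (ii) otherwise contains the right ingredients --- compatibility of the Thom classes under restriction giving $\delta(\beta e_{F,P})=\delta(\beta)e_{F,Q}$ when $Q\subseteq F$, and the Thom--Gysin sequence forcing $\delta$ to vanish on multiples of $e_{F,P}$ for the one facet $F$ containing $P$ but not $Q$ --- though note that it is the facet \emph{not} containing $Q$ whose generator dies under $\rho_{PQ}$, not ``the deeper face $Q\subseteq F$'' as written.
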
 

\begin{proof}
	To prove the first claim, notice that the map \[\phi_P\colon H^*(P, \partial P) \xrightarrow{\cong} H^*_{G_P^*}(X^P, X^{\partial P}) \rightarrow H^*_G(X^P, X^{\partial P})\] is the composite of the isomorphism (\ref{equation423}) and the map in equivariant cohomology induced by the canonical projection $G \rightarrow G_P^* = G/G_P$. Suppose that $F_1, \ldots, F_k$ are the facets containing $P$. Using (i), we can define a map
	\[ \psi_P \colon H^*(P, \partial P) \otimes R_P \rightarrow H_G^*(X^P, X^{\partial{P}}) \]
	by setting $\psi_P(\alpha \otimes t_{F_1, P}^{m_1}\cdots t_{F_k, P}^{m_k}) = \phi_P(\alpha)e_{F_1,P}^{m_1} \cdots e_{F_k, P}^{m_k}$. On the other hand, we have an isomorphism of algebras
	\[ \rho \colon H_G^*(X^P, X^{\partial P}) \rightarrow R_P \otimes H_{G_P^*}(X^P, X^{\partial P}) \rightarrow H^*(P, \partial P) \otimes R_P  \]
	by choosing a splitting of $G = G_P \times G_P^*$. In particular, for $e_{F,P} \in H^1(X^P, X^{\partial P})$, we have that $\rho(e_{F,P}) \in H^0(P, \partial P) \otimes (R_P)_1 \oplus H^1(P, \partial P)\otimes(R_P)_0   \cong (R_P)_1 \oplus H^1(P, \partial P)$. As $t_{F,P}$ is the restriction of $e_{F,P}$ to $R_P$ we have then that $\rho(e_{F,P}) = t_{F,P} + a_F$ for some $a_F \in H^1(P, \partial P)$. Using this computation we get that for $\alpha \in H^*(P, \partial P)$ it holds that
	\begin{align*}
	\rho \circ \psi_P(\alpha \otimes t_{F_1, P}^{m_1}\cdots t_{F_k, P}^{m_k}) &= \rho(\phi_P(\alpha)e_{F_1,P}^{m_1} \cdots e_{F_k, P}^{m_k}) \\
	&= (\alpha \otimes 1)\rho(e_{F_1,P})^{m_1}\cdots \rho(e_{F_k, P})^{m_k}  \\
	&= (\alpha \otimes 1)(1 \otimes t_{F_1, P} + a_{F_1} \otimes 1)^{m_1} \cdots (1 \otimes t_{F_k, P}+ 1\otimes a_{F_k})^{m_k} \\
	&= \alpha \otimes (t_{F_1,P}^{m_1}\cdots t_{F_k,P}^{m_k}) + S
	\end{align*}
	where $S$ consists of sum of terms in $H^*(P, \partial P) \otimes H^*(R_P)$ whose elements in the second factor are of degree lower than $m_1 + \cdots + m_k$; therefore, we obtained that $\rho \circ \psi_P$ is bijective and so is $\psi_P$.
	
	\medskip
	
	Finally, to prove (iii), we need to check that $\delta \psi_P (\alpha \otimes t_{F_1,P}^{m_1}\cdots t_{F_k, P}^{m_k}) = \psi_Q(\delta(\alpha) \otimes \rho_{PQ} (t_{F_1,P}^{m_1} \otimes t_{F_k, P}^{m_k}))$. As the maps $\phi_P, \phi_Q$ arise from natural constructions, they commute with $\delta$. Furthermore,  since $\rho_{PQ}(t_{F,P})$ is either  $t_{F,Q}$ if $Q \subseteq F$ or zero otherwise, we only need to prove that $\delta(\beta e_{F,P})$ is either $\delta(\beta)e_{F,Q}$ if $Q \subseteq F$ or zero otherwise. Recall that $\delta$ arises from the connecting homomorphism  $\delta \colon H^*(P, \partial P) \cong H^*(\partial Q, \partial Q \setminus (P\setminus \partial P)) \rightarrow H^*(Q, \partial Q)$ which induces the map 
	\[ \delta \colon H^*_G(X^P, X^{\partial P}) \cong H_G^*(X^{\partial Q}, X^{\partial Q \setminus (P \setminus \partial Q)}) \rightarrow H_G^{*+1}(X^Q , X^{\partial Q}).\] 
	In the case $P \subseteq Q$, by the Thom isomorphism theorem we have isomorphisms $H^{*-1}_G(X^{P\setminus \partial P}) \cong H_G^*(X^P, X^{\partial P})$ and $H^{*-1}_G(X^{Q \setminus \partial Q}) \cong H^*_G(X^{Q}, X^{\partial Q})$ induced by the multiplication by $e_{F,P}$ and $e_{F,Q}$ respectively. As both $e_{F,P}$ and $e_{F,Q}$ are restrictions of the equivariant Euler class of the normal bundle $N^F$, we have that $\delta(\beta e_{F,P}) =  \delta(\beta)e_{F,Q}$. In the second case, we have that $e_{F,P}$ is then the restriction of the Euler class of the normal bundle of $X^P$ in $X^Q$ as $P \subseteq_1 Q$. By the Thom-Gysin exact sequence we have that $\delta$ vanishes precisely in the multiples of $e_{F,P}$. 
\end{proof}

For a face $P$ of $X/G$, the filtration by its faces leads to an spectral sequence with $E_1$-term given by \[E_1^{p,q}  = \bigoplus_{\substack{Q \subseteq P \\ \rank Q = i }} H^{p+q}(Q, \partial Q) \Rightarrow H^*(P) \]
the columns of this spectral sequence give rise to a complex that will be denoted by $B^{i}(P)$. This complex will be related to the Atiyah-Bredon sequence discussed at the beginning of this section  and it will provide a criterion to the syzygies in $G$-equivariant cohomology as it is shown in the following theorem which is analogous to \cite[Thm.1.3]{franzQuot} for the torus case.
\begin{theorem}\label{quotcrit}
	Let $X$ be a $G$-manifold with a locally standard action of a $2$-torus $G$. Then $H^*_G(X)$ is a $j$-th syzygy over $H^*(BG;\bk)$ if and only if for any face $P$ of the manifold with corners $M = X/G$ we have that $H^i(B^*(P))= 0$ for any $i > \max(\rank P - j, 0)$
\end{theorem}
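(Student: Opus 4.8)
The plan is to reduce Theorem \ref{quotcrit} to Proposition \ref{depthcor} by identifying the Atiyah--Bredon complex $AB^*_L(X^K)$ with a direct sum of the combinatorial complexes $B^*(P)$. First I would note that by Proposition \ref{depthcor}, $H^*_G(X)$ is a $j$-th syzygy if and only if $H^i(AB^*_L(X^K)) = 0$ for every isotropy subgroup $K$ with $L = G/K$ and every $i > \max(\rank L - j, 0)$. So it suffices to prove that for each isotropy subgroup $K$ there is an isomorphism of complexes
\[
AB^*_L(X^K) \;\cong\; \bigoplus_{P} B^*(P),
\]
where the sum runs over those faces $P$ of $M$ with $G_P = K$ (equivalently, the faces $P$ such that $X^P$ is a connected component of $X^K$ on whose interior $L$ acts freely), with $\rank L = \rank P$ for such $P$. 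Given this, the vanishing of $H^i(AB^*_L(X^K))$ for $i > \max(\rank L - j,0)$ is equivalent to the vanishing of $H^i(B^*(P))$ for all such $P$ and all $i > \max(\rank P - j, 0)$; ranging over all isotropy $K$ exhausts all faces $P$ of $M$, giving the theorem.

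The core of the argument is therefore the identification of $AB^*_L(X^K)$ term by term and differential by differential. For a fixed isotropy subgroup $K$, write $Y = X^K$, a closed $L$-manifold (union over faces $P$ with $G_P \supseteq K$ of the $X^P$). The $i$-th term of $AB^*_L(Y)$ is $H^{*+i}_L(Y_i, Y_{i-1})$ where $Y_i$ is the $i$-th $L$-orbit skeleton of $Y$. By excision, this relative equivariant cohomology splits as a direct sum over the connected components of $Y_i \setminus Y_{i-1}$, i.e.\ over the faces $Q$ of $M$ with $\rank Q \leq i$ lying in the closure-poset above $K$; the free part of each such component is exactly $X^{Q \setminus \partial Q}$ with its free $G_Q^*$-action, and Proposition \ref{lemA5.2}(i) gives $H^*_L(X^Q, X^{\partial Q}) \cong H^*(Q,\partial Q) \otimes R_Q$ — but here the relevant quotient is by $L$, not $G_Q^*$, and one must bookkeep the $R_Q$-factor carefully. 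The key point, which I expect to follow by combining Proposition \ref{lemA5.2}(i) with the tensor decomposition $R_P \cong R_K \otimes (\text{stuff})$ coming from $G_P \cong K \times \cdots$, is that after summing over components the $R$-factors reassemble so that the $i$-th term of $AB^*_L(X^K)$ becomes $\bigoplus_P B^i(P)$, with the direct sum over faces $P$ with $G_P = K$ and $B^i(P) = \bigoplus_{Q \subseteq P, \rank Q = i} H^*(Q, \partial Q)$. Then Proposition \ref{lemA5.2}(ii) shows the Atiyah--Bredon differential $\delta$ matches the differential $\delta \otimes \rho_{PQ}$ of $B^*(P)$ (which after the identifications is precisely the connecting map of the triple $(Q, \partial Q, \partial Q \setminus (P \setminus \partial P))$, i.e.\ the stated differential of the complex $B^*(P)$).

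I would also dispose of the edge cases: the leftmost term $H^*_L(Y)$ of $AB^*_L(Y)$ corresponds to the $i=0$ term $B^0(P) = \bigoplus_{\rank Q = 0} H^*(Q, \partial Q)$ glued to $H^0$ of the filtration spectral sequence — i.e.\ one must check the augmentation $H^*_L(Y) \to \bigoplus H^*_L(Y_0)$ is compatible, which is where the spectral sequence identification $E_1^{i,*} = B^i(P) \Rightarrow H^*(P)$ preceding the theorem statement is used to anchor the complex correctly at both ends. The main obstacle I anticipate is the bookkeeping of the polynomial coefficient rings: $AB^*_L(X^K)$ is a complex of $R_L$-modules while the $B^*(P)$ are complexes of $\bk$-vector spaces, and making the splitting $G_P \cong K \times (\text{complement})$ compatible across all faces $Q$ in the interval $[K, P]$ simultaneously — so that the various $\psi_Q$ isomorphisms from Proposition \ref{lemA5.2} glue into a single isomorphism of complexes rather than merely term-wise isomorphisms — requires a coherent choice of complements, exactly as in the torus case \cite{franzQuot}. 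Once that coherence is in place, commutativity of all the relevant squares is Proposition \ref{lemA5.2}(ii), and the proof concludes by transporting the cohomology vanishing criterion of Proposition \ref{depthcor} through this isomorphism.
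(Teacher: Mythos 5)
Your overall strategy --- reduce to Proposition \ref{depthcor} and then identify the Atiyah--Bredon complex with a sum of the combinatorial complexes $B^*(P)$ via Proposition \ref{lemA5.2} --- is the same as the paper's. But the identification you assert, namely
\[
AB^*_L(X^K) \;\cong\; \bigoplus_{P \colon G_P = K} B^*(P),
\]
is false, and the failure occurs exactly at the point you flag as the ``main obstacle'' (the bookkeeping of the polynomial factors) and then assert without justification. By excision the $i$-th term of $AB^*_L(X^K)$ splits as $\bigoplus_Q H^*_L(X^Q, X^{\partial Q})$ over the faces $Q$ with $G_Q \supseteq K$ and $\rank Q = i$, and each such summand is isomorphic to $H^*(Q,\partial Q)\otimes H^*(B(G_Q/K))$; for $i < \rank L$ the residual isotropy $G_Q/K$ is nontrivial, so these summands carry genuine polynomial tensor factors and are infinite-dimensional over $\bk$, whereas $\bigoplus_{G_P=K} B^i(P)$ is a finite sum of finite-dimensional spaces. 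Already for $K$ trivial the zeroth term $AB^0_G(X) = H^*_G(X_0)$ is a nonzero free $R$-module (for the reflection action on $S^1$ it is $\bk[t]^2$, not $\bk^2 = B^0(M)$). The $R$-factors do not ``reassemble away.''

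What is actually true is that the polynomial factors redistribute among the faces lying above $Q$: with $t_P = \prod_{P \subseteq F \subseteq_1 M} t_{F,P}$, the monomial basis of $R_Q$ gives a vector-space decomposition $R_Q \cong \bigoplus_{P \supseteq Q} R_P t_P$, so that applying $\psi_Q$ and regrouping by $P$ yields
\[
AB^i_G(X) \;\cong\; \bigoplus_{P} B^i(P)\otimes R_P t_P,
\]
summed over \emph{all} faces $P$, compatibly with the differentials (one must also check that $\delta\otimes\rho_{QQ'}$ preserves the $P$-grading, which follows from $\rho_{QQ'}(t_{F,Q})$ being $t_{F,Q'}$ or $0$). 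Hence $H^i(AB^*_G(X)) \cong \bigoplus_P H^i(B^*(P))\otimes R_P t_P$, and likewise for each $AB^*_{G_P^*}(X^P)$. Consequently the vanishing criterion of Proposition \ref{depthcor} for a fixed face $P$ imposes $H^i(B^*(Q)) = 0$ for every face $Q \subseteq P$ in the range $i > \max(\rank P - j, 0)$, not only for $Q = P$; one then needs the (easy but necessary) observation that, ranging over all $P$, this family of conditions is equivalent to the one in the theorem because $\max(\rank Q - j, 0) \leq \max(\rank P - j, 0)$ whenever $Q \subseteq P$. Your final equivalence rests on an isomorphism that does not hold, so the reduction has to be rerun with this corrected decomposition.
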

\begin{proof}
	Let $Q$ be a face of $X/G$. We define the element $t_Q = \prod_{Q \subseteq F \subseteq_1 X/G} t_{F,Q} \in R_Q$. These elements induce an isomorphism of vector spaces $R_Q \cong \bigoplus_{Q \subseteq P} R_Pt_P $. On the other hand, by Proposition \ref{lemA5.2} there is an isomorphism of vector spaces $H^*(Q, \partial Q) \otimes R_Q \rightarrow H^*_G(X^Q, X^{\partial Q})$ compatible with the differentials. We have then an isomorphism
	\begin{equation}\label{equa455}
	\bigoplus_{Q: \rank Q = i} H^*(Q, \partial Q) \otimes R_Q \cong \bigoplus_{Q: \rank Q = i}H^*_G(X^{Q}, X^{\partial Q}).
	\end{equation}	
	
	Noticing that the $i$-th equivariant skeleton of $X$ is given by $\displaystyle X^{i} = \bigcup_{\substack{P\\ \rank P = i}} X^P = \bigcup_{\substack{P \\ \rank P = i+1}}
	X^{\partial P}$, we see that the last term of (\ref{equa455}) is the $i$-th term of the Atiyah-Bredon sequence $AB_G^{i}(X)$ and so there is an isomorphism (with an appropriate degree shift)
	\[ \bigoplus_{\substack{Q\\ \rank Q = i}} \; \bigoplus_{\substack{P\\ Q \subseteq P}}  H^*(Q,\partial Q) \otimes   R_Pt_P \cong \bigoplus_{P \subseteq X/G} B^{i}(P) \otimes R_Pt_P \cong AB_G^{i}(X) \]
	compatible with the differentials. Therefore, $H^i(AB^{*}_G(X)) = \bigoplus_{P \subseteq X/G} H^{i}(B^{*}(P)) \otimes R_Pt_P$.
	
	\medskip
	From Proposistion \ref{depthcor}, we have that $H^*_G(X)$ is a $j$-th syzygy if and only if $H^i(AB_{G_P^*}(X^P)) = 0$ for all faces $P$ and $i > \max(\rank P -j, 0)$. The isomorphism above shows that this condition is equivalent to the vanishing of  $H^i(B^*(P))$ for all $P$ and $i > \max(\rank P -j,0)$.
\end{proof}
We will use this criterion to construct syzygies in $G$-equivariant cohomology for $2$-torus actions. The dimension of a manifold with a locally standard action of  a $2$-torus is constrained to the rank of the torus. In fact, if $G = (\Zd)^r$ and $X$ is a $G$-manifold with a locally standard action of $G$ and $X^G \neq \emptyset$, then $\dim X \geq r$. In fact, if the action is locally standard, then $X^G$ is a submanifold of codimension at least $r$ and there can not be any fixed points if $\dim X < r$.
\begin{example}
	\normalfont
	If $X$ is a manifold with a locally standard action of $\Zd$, then the orbit space $M = X/G$ is a manifold with boundary. Conversely, any manifold with boundary can be realized as the orbit space of the manifold $X = (M \sqcup M)/ \partial M$ with the involution induced by the map $M \sqcup M \rightarrow M \sqcup M$ that swaps factors. The action is locally standard on $X$ as it can be seen as the reflection along the hyperplane where $ \partial M$ lies and so $X^G = \partial M$. 
	
	\medskip
	
	Theorem \ref{quotcrit} translates in this case on the statement that $X$ is $G$-equivariantly formal if and only if the map $H^*(\partial M) \rightarrow H^{*+1}(M, \partial M)$ is surjective, or equivalently, the map $H^*(M) \rightarrow H^*(\partial M)$ induced by the inclusion is injective. For example, if $M = S^1 \times [0,1]$ is a cylinder, then the map $H^*(M) \rightarrow H^*(\partial M)$ is injective and so the manifold $X$ is $G$-equivariantly formal. Then $X$ is homeomorphic to the torus $S^1 \times S^1$ and the involution is given by the axis reflection on one $S^1$-factor.  
	
	On the other hand, $M$ does not need to be orientable; for example, if $M$ is the Mobius strip, then $M$ can be realized as the orbit space of a Klein bottle $X$. Moreover, the map induced in cohomology by the inclusion $\partial M \rightarrow M$ is the zero map and thus Theorem \ref{quotcrit} implies that $H^*_G(X)$ is not equivariantly formal.
\end{example}

Let $G = \Zd \times \Zd$ and let $M$ be  a nice manifold with corners locally diffeomorphic to $\R^n \times [0, \infty)^2$. Then the face lattice of $M$ consists of facets $F$ (of rank 1) and faces $P$ of rank $0$. Suppose that $X$ is a $(n+2)$-manifold with a locally standard action of $G$. From Theorem \ref{quotcrit} we have the following cases.
\begin{itemize}
	\item $H_G^*(X)$ is a $2$-nd syzygy (or equivariantly formal) if and only if for any facet $F$ the map $H^*(\partial F) \rightarrow H^{*+1}(F, \partial F)$ is surjective and the sequence $\bigoplus_P H^*(P) \rightarrow \bigoplus_F H^{*+1}(F, \partial F) \rightarrow H^{*+2}(M, \partial M) \rightarrow 0$ is exact at the second and third position.
	
	\item $H_G^*(X)$ is a $1$-st syzygy if and only if for any facet $F$ the map $H^*(\partial F) \rightarrow H^{*+1}(F, \partial F)$ is surjective and the sequence $\bigoplus_P H^*(P) \rightarrow \bigoplus_F H^{*+1}(F, \partial F) \rightarrow H^{*+2}(M, \partial M) \rightarrow 0$ is exact only  at the third position. If the latter holds, the sequence is exact also at the second position. 
\end{itemize}

\begin{example}
	\normalfont
	Let $M$ be the $1$-simplex $\{ (x_1, x_2) \in \R_2, x_1 + x_2 \leq 1, x_1, x_2 \geq 0\}$. The manifold $X = \tilde{X}/\sim$ can be taken as the real projective space $\R P^2$ as shown in the following figure.


	\begin{center}
		\begin{multicols}{2}
			\begin{tikzpicture}
			\draw[-,blue, thick] (0,0) -- (2,0);
			\draw[-,violet, thick] (2,0) -- (2,2);
			\draw[-,yellow, thick] (2,2) -- (0,0);
			\draw[black] (1,-1) node[anchor=south] {};
			\draw [fill=gray!10] (0,0) -- (2,0) -- (2,2);
			\filldraw[black] (0,0) circle (2pt) ; 
			\filldraw[red] (2,2) circle (2pt); 
			\filldraw[magenta] (2,0) circle (2pt); 
			\end{tikzpicture}
			
			\columnbreak
			\begin{tikzpicture}
			\draw [fill=gray!10] (-1.5,-1.5) rectangle (1.5,1.5);
			\draw[-,teal, thick] (0,0) -- (-1.5,1.5);
			\draw[-,green, thick] (0,0) -- (-1.5,-1.5);
			\draw[-,yellow, thick] (-1.5,1.5) -- (-1.5,-1.5);
			\draw[-,yellow, thick] (1.5,1.5) -- (1.5,-1.5);
			\draw[-<,yellow, thick] (-1.5,0);
			\draw[->,yellow, thick] (1.5,0);
			\draw[-,blue, thick] (0,0) -- (1.5,-1.5);
			\draw[-,orange, thick] (-1.5,1.5) -- (1.5,1.5);
			\draw[-,orange, thick] (-1.5,-1.5) -- (1.5,-1.5);
			\draw[->,orange, thick] (0.1,-1.5) -- (0,-1.5);
			\draw[->,orange, thick] (0,1.5) -- (0.1,1.5);
			\draw[-,violet, thick] (0,0) -- (1.5,1.5);
			\filldraw[magenta] (0,0) circle (2pt) ; 
			\filldraw[red] (1.5,1.5) circle (2pt); 
			\filldraw[red] (-1.5,-1.5) circle (2pt); 
			\filldraw[black] (-1.5,1.5) circle (2pt);
			\filldraw[black] (1.5,-1.5) circle (2pt); 
			\end{tikzpicture}
			
		\end{multicols}
	\end{center}
	Since $M$ and its faces  are contractible it is easy to check that the $G$-equivariant cohomology of $X$ is a free module by looking at the complex $B^*(P)$ described above. Similarly, the action of $G$ on $X$ can be represented  as the reflection along the main diagonals on the square. Therefore, $X^G$ consists of $3$ points and thus $b(X) = b(X^G) = 3$ confirming the result obtained from the quotient criterion.

\end{example}

To construct a space whose equivariant cohomology is torsion-free but not free, we need to consider an action of a $2$-torus of rank at least $3$. Following \cite[Lemma 7.1]{F3}, let us start with the following manifold with corners
\[ M = \{ (u,z) \in ([0,\infty]\times \R^2)^3 : \vert z_i \vert^2 + \vert u_i \vert^2 = 1, u_1+u_2+u_3 = 0\}   \]
and $i=1,2,3$, where $\R_+$ denotes the non-negative real numbers. Then $M$ is a smooth manifold with corners locally diffeomorphic to $[0,\infty)^3 \times \R$. The projection $ M \rightarrow (\R^2)^3$ of the first component induces a homeomorphism between $M$ and the subspace of $(\R^2)^3$ consisting of these triples $(u_1,u_2, u_2)$ such that $\max\{ \vert u_1 \vert, \vert u_2 \vert, \vert u_3 \vert\} \leq 1$ and $u_1 + u_2 + u_3 = 0$. The latter space describes the configuration of triangles (including degenerate triangle) in $\mathbb{R}^2$ with sides of length at most 1. Therefore, $M$ is homeomorphic to the intersection of a $6$-dimensional ball with a linear subspace of codimension $2$ and thus $M$ is topologically a $4$-dimensional ball. In particular, $\partial M \cong S^3$ and $H^*(M, \partial M) \cong \widetilde{H}^*(S^4)$. Now we will look at the face decomposition of $M$.

\begin{itemize}
	\item $M$ has exactly one face $P$ of rank zero. Namely, it is given by those elements $(u,z) \in M$ such that $z_i = 0$, and then $u_i \in S^1$ for all $i$. Since one of the $u's$ entries depends on the other two, $P$  can be identified with the manifold
	\[ P = \{(u_1,u_2,u_3) \in (S^1)^3 : u_1+u_2+u_3 = 0\} = \{ (x,y) \in S^1 \times S^1 : \vert x-y \vert = 1 \} \]
	$P$ then is the configuration space of equilateral triangles in $\R^2$ with one vertex in the origin and two over the circle. Each of these configurations is determined by a rotation of any of the pairs $(1, e^{i\pi/3})$ or $(1, e^{-i\pi/3})$. In particular, this implies that $P \cong S^1 \sqcup S^1$. Thus we have that $H^0(P) \cong H^1(P) = \bk \oplus \bk$ and it is zero in any other degree.
	
	\item $M$ has three faces of rank $1$. Namely, $Q_{12}, Q_{13}$ and $Q_{23}$ where $Q_{ij}$ consists of the pairs $(u,z) \in M$ such that $z_i = z_j = 0$.  We identify $Q_{ij}$ with the manifold with boundary
	
	\[ Q = \{ (x,y) \in S^1 \times S^1 : \vert x-y \vert \leq 1 \} \]
	
	In terms of configuration spaces, this consists of isosceles triangles with one vertex in the origin, two over the circle and whose base is of length at most $1$ (Here we allow the degenerate triangle). We can show that there is a homeomorphism $Q \cong S^1 \times I$ given by a rotation of the pairs $(1,e^{it \pi/3}) \in Q$ where $-1 \leq t \leq 1$. Computing the relative cohomology $H^*(Q, P)$  of the cylinder relative to the boundary we see that $H^1(Q,P) \cong H^2(Q,P) \cong \bk$ and it is zero in other degrees.
	
	\item $M$ has three facets (of rank 2). Namely, $F_{1}, F_2, F_3$ where $F_i$ consists of the pairs $(u,z) \in M$ such that $z_i = 0$.  We identify $F_i$ with the manifold with corners
	
	\[ F = \{ (x,y) \in S^1 \times D_2 : \vert x-y \vert  \leq  1 \} \]
	
	This space describes the configuration of triangles with one side of length $1$, and two of length at most $1$. Each of these configurations is determined by a rotation of the pairs $(1, se^{it \pi/3}) \in F$ where $ 0 \leq s \leq 1$ and $-1 \leq t \leq 1$. Then $F$ is homeomorphic to $S^{1} \times I \times I \cong S^1 \times D_2$. Looking at the relative cohomology $H^*(F,\partial F)$ of the solid torus with respect to its boundary (the torus) we find that $H^2(F, \partial F) \cong H^3(F, \partial F) \cong \bk$  and it is zero in other degrees.

\end{itemize}

The face lattice of $M$ is then

\begin{center}
	\begin{tikzpicture}
	\draw[-,black] (0,0) -- (-1,1);
	\draw[-,black] (0,0) -- (0,1);
	\draw[-,black] (0,0) -- (1,1);
	\draw[-,black] (-1,1.5) -- (-1.5,2.5);
	\draw[-,black] (-1,1.5) -- (-0.1,2.5);
	\draw[-,black] (0,1.5) -- (-1.4,2.5);
	\draw[-,black] (0,1.5) -- (1.4,2.5);
	\draw[-,black] (1,1.5) -- (1.5,2.5);
	\draw[-,black] (1,1.5) -- (0,2.5);
	\draw[-,black] (0,3) -- (0,4);
	\draw[-,black] (-1,3) -- (0,4);
	\draw[-,black] (1,3) -- (0,4);
	\draw[black] (0,-0.5) node[anchor=south] {$P$};
	\draw[black] (-1,1.5) node[anchor=north] {$Q_{12}$};
	\draw[black] (0,1.5) node[anchor=north] {$Q_{13}$};
	\draw[black] (1,1.5) node[anchor=north] {$Q_{23}$};
	\draw[black] (-1.5,3) node[anchor=north] {$F_{1}$};
	\draw[black] (0,3) node[anchor=north] {$F_{2}$};
	\draw[black] (1.5,3) node[anchor=north] {$F_{3}$};
	\draw[black] (0,4.5) node[anchor=north] {$M$};
	\end{tikzpicture}
\end{center}
\medskip

Consider the manifold
\[ X = \{ (z,u) \in (\R \times \R^2)^3 : \vert z_i \vert^2 + \vert u_i \vert^2 = 1, u_1+u_2+u_3 = 0\}   \]
with the locally standard action of $G$ on $X$ given by multiplication on the variables $z_i$. Then $X$ is  $G$-locally standard manifold and $X/G \cong M$. We will see that the $G$-equivariant cohomology of $X$ is a first syzygy but not a second syzygy using Theorem \ref{quotcrit}. It is a first syzygy as the maps

\[ H^*(P) \rightarrow H^{*+1}(Q,P)\]\[  H^{*}(Q_{j},P) \oplus H^*(Q_k, P) \rightarrow H^{*+1}(F_i, \partial F_i), \]
\[ \bigoplus_{i=1}^3 H^*(F_i, \partial F_i) \rightarrow H^{*+1}(M, \partial M) \]

are surjective as it can be seen by using the explicit computation of these groups mentioned above. On the other hand, $H^*_G(X)$ is not a second syzygy as the sequence
\[ \bigoplus_{j=1}^3 H^*(Q_j, \partial Q_j)\rightarrow \bigoplus_{i=1}^3 H^{*+1}(F_i, \partial F_i) \rightarrow H^{*+2}(M, \partial M) \rightarrow 0\]
is not exact at the second position; that is, $H^2(B^*(M)) \neq 0$. In fact, the complex $B^*(M)$ takes the form 
\[  \bk^3 \rightarrow \bk^3 \rightarrow 0 \rightarrow  0 \]
when $\ast = 1$. The map $ \bk^3 \rightarrow \bk^3$ is given by $(a,b,c) = (a+b, a+c, b+c)$ which is of rank 2  and then  $H^2(B^*(M)) \neq 0$. 

\medskip

We constructed a $4$-dimensional manifold $X$ with an action of $G = (\Zd)^3$  such that the equivariant cohomology $H^*_G(X)$ is torsion-free but not free as $H^*(BG)$-module. The manifold $X$ realizes the smallest possible dimension where a manifold  with a locally standard action of a $2$-torus $G$ whose equivariant cohomology is torsion-free but not free exists. As we previously discussed, if $\rank G \leq 2$ then being free is equivalent to being torsion-free in equivariant cohomology, so the minimal example should occur when $\rank G = 3$. On the other hand, if the dimension of a manifold is the same as the rank of the torus, its $G$-equivariant cohomology is free if and only if it is torsion-free \cite[\S 7.2]{franzQuot}; therefore, we must have that $\dim X > 3$.

\medskip
The generalization of this space is discussed in \cite{P} which is the real version of the big Polygon spaces introduced in \cite{F3}. These spaces allow us to  construct syzygies of higher order in equivariant cohomology for 2-torus actions \cite{P} and also for torus actions with a compatible involution  \cite{Ch}.

\bibliographystyle{abbrv}
\bibliography{2tori}

\end{document}